\tikzstyle{nodino}=[circle,draw,fill,inner sep=0pt,minimum size=0.5mm]
\tikzstyle{infinito}=[circle,inner sep=0pt,minimum size=0mm]
\tikzstyle{nodo}=[circle,draw,fill,inner sep=0pt, minimum size=0.5*width("k")]
\tikzstyle{nodo_vuoto}=[circle,draw,inner sep=0pt, minimum size=0.5*width("k")]
\tikzset{every loop/.style={min distance=10mm,in=300,out=240,looseness=10}}
\tikzset{place/.style={circle,thick,draw=blue!75,fill=blue!20,minimum
size=6mm}}
\tikzset{place2/.style={circle,thick,draw=red!75,fill=red!20,minimum
size=6mm}}
\newcommand{\rr}{{\mathbb R}}
\newcommand{\G}{{\mathcal{G}}}
\newcommand{\udotT}{\|u'\|_{L^2(\mathcal{T})}}
\newcommand{\uLtwoT}{\|u\|_{L^2(\mathcal{T})}}
\newcommand{\HmuT}{H_\mu^1(\mathcal{T})}
\newcommand{\uLsixT}{\|u\|_{L^6(\mathcal{T})}}
\newcommand{\T}{\mathcal{T}}
\newcommand{\K}{\mathcal{K}}
\newcommand{\h}{\mathcal{H}}
\newcommand{\dx}{\,dx}
\theoremstyle{plain} 
\newtheorem{thm}{Theorem}[section] 
\newtheorem{lem}[thm]{Lemma}
\theoremstyle{definition}
\theoremstyle{definition}
\theoremstyle{remark} 
\newtheorem{rem}{Remark}[section]
\title{Ground states of the $L^2$-critical NLS equation with localized nonlinearity on a tadpole graph}
\author{Simone Dovetta$^{\dagger,\sharp}$ and Lorenzo Tentarelli$^\ddagger$
\\ \ \\ \ \\
{\small  $^\dagger$Dipartimento di Scienze Matematiche ``G.L. Lagrange'' } \\
{\small Politecnico di Torino } \\
{\small Corso Duca degli Abruzzi, 24, 10129 Torino, Italy} \\ \ \\
{\small $^\sharp$Dipartimento di Matematica ``G. Peano''}\\
{\small Universit\`a degli Studi di Torino }\\
{\small Via Carlo Alberto, 10, 10123, Torino, Italy} \\
{\small \texttt{simone.dovetta@polito.it}}\\ \ \\
{\small $^\ddagger$Dipartimento di Matematica} \\
{\small Sapienza Universit\`a di Roma } \\
{\small Piazzale Aldo Moro, 5, 00185 Roma, Italy} \\
{\small \texttt{tentarelli@mat.uniroma1.it}}
}
\begin{document}

\maketitle

\begin{abstract}
 The paper aims at giving a first insight on the existence/nonexistence of ground states for the $L^2$-critical NLS equation on metric graphs with localized nonlinearity. As a consequence, we focus on the tadpole graph, which, albeit being a toy model, allows to point out some specific features of the problem, whose understanding will be useful for future investigations. More precisely, we prove that there exists an interval of masses for which ground states do exist, and that for large masses the functional is unbounded from below, whereas for small masses ground states cannot exist although the functional is bounded.
\end{abstract}

\noindent{\small AMS Subject Classification: 5R02, 35Q55, 81Q35, 49J40.}
\smallskip

\noindent{\small Keywords: minimization, metric graphs, critical growth, nonlinear Schr\"odinger equation, localized nonlinearity.}


\section{Introduction}

The study of evolution equations on \emph{metric graphs} or networks has gained a great popularity in recent years, since they represent effective models for the study of the dynamics of physical systems living in branched spatial structures (see, e.g., \cite{AST5} and the references therein). More precisely, a particular interest has been addressed to the investigation of the \emph{focusing} nonlinear Schr\"odinger (a.k.a. NLS) equation, namely
\begin{equation}
 \label{eq-NLStime}
 \imath\dot{\psi}=-\psi''-|\psi|^{p-2}\,\psi\qquad(p\geq2)
\end{equation}
with suitable boundary conditions at the vertices of the graph, as it is supposed to well approximate, for $p=4$, the behavior of Bose-Einstein condensates in ramified traps (see, e.g., \cite{GW}).

\medskip
From the mathematical point of view, the discussion is mainly focused on the study of the stationary solutions of \eqref{eq-NLStime}, that is functions of the form $\psi(t,x)=e^{i\lambda t}\,u(x)$, with $\lambda\in\rr$, solving the stationary equation associated to \eqref{eq-NLStime}

\begin{equation*}
   \label{eq-statNLSgeneral}
   u''+|u|^{p-2}\,u=\lambda u\,.
\end{equation*}

In this perspective, the first pioneering works (e.g., \cite{ACFN1,ACFN4,ACFN5}, and subsequently \cite{ACFN6}) concern the study of the so-called \emph{infinite $N$-star} graph (see Figure \ref{fig-nstar}), with boundary conditions of $\delta$\emph{-type} or $\delta'$\emph{-type}.

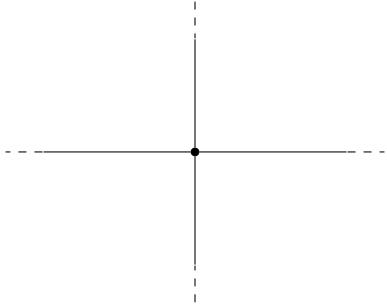
\begin{figure}
 \centering
\begin{tikzpicture}
\node at (0,0) [nodo] (00) {};
\node at (2,0) [infinito]  (20) {};
\node at (2.5,0) [infinito]  (250) {};
\node at (-2,0) [infinito]  (-20) {};
\node at (-2.5,0) [infinito]  (-250) {};
\node at (0,1.5) [infinito]  (015) {};
\node at (0,2) [infinito]  (02) {};
\node at (0,-1.5) [infinito] (0-15) {};
\node at (0,-2) [infinito] (0-2) {};
\draw [-] (00) -- (20);
\draw [dashed] (20) -- (250);
\draw [-] (00) -- (-20);
\draw [dashed] (-20) -- (-250);
\draw [-] (00) -- (015);
\draw [dashed] (02) -- (015);
\draw [-] (00) -- (0-15);
\draw [dashed] (0-2) -- (0-15);
\end{tikzpicture}
\caption{infinite $N$-star graph ($N=4$).}
\label{fig-nstar}
\end{figure}

On the other hand, in the case of \emph{Kirchhoff conditions}, that is functions with the sum of the derivatives equal to zero at the vertices (see \eqref{eq-kirch} below), more complex topologies have been managed (e.g., Figure \ref{fig-gen}). In \cite{AST2,AST3,AST4,LLS} there is a discussion of the existence of \emph{ground states}, namely solutions of \eqref{eq-statNLSgeneral} arising as global minimizers of the NLS energy functional
\begin{equation}
 \label{eq-encomp}
 E(u):=\frac12\int_\G|u'|^2\dx-\frac1p\int_\G|u|^p\dx
\end{equation}
among functions with fixed mass $\mu>0$, i.e. $\int_\G|u|^2\dx=\mu$. Precisely, \cite{AST2,AST3} investigate the so-called $L^2$-subcritical regime $p\in(2,6)$, while \cite{AST4} treats the critical case $p=6$. Furthermore, in \cite{AST6,CFN1,NPS,NRS} the investigation has been extended to more general stationary solutions that do not necessarily minimize the energy functional.

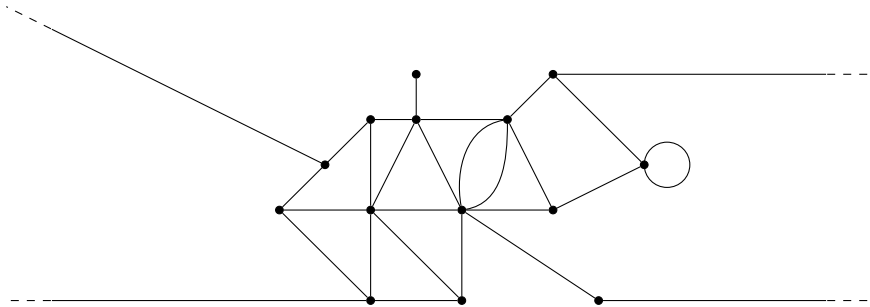
\begin{figure}
\centering
\begin{tikzpicture}[xscale= 0.6,yscale=0.6]
\node at (0,0) [nodo] (1) {};
\node at (2,0) [nodo] (2) {};
\node at (4,1) [nodo] (3) {};
\node at (2,3) [nodo] (4) {};
\node at (1,2) [nodo] (5) {};
\node at (-1,2) [nodo] (6) {};
\node at (-2,0) [nodo] (7) {};
\node at (-1,3) [nodo] (8) {};
\node at (-2,2) [nodo] (9) {};
\node at (-4,0) [nodo] (10) {};
\node at (-2,-2) [nodo] (11) {};
\node at (0,-2) [nodo] (12) {};
\node at (3,-2) [nodo] (13) {};
\node at (-3,1) [nodo] (14) {};
\node at (8,-2) [infinito] (15) {};
\node at (9,-2) [infinito] (15b) {};
\node at (8,3) [infinito] (16) {};
\node at (9,3) [infinito] (16b) {};
\node at (-9,-2) [infinito] (17) {};
\node at (-10,-2) [infinito] (17b) {};
\node at (-9,4) [infinito] (18) {};
\node at (-10,4.5) [infinito] (18b) {};
\draw (4.5,1) circle (0.5cm);
\draw [-] (1) -- (2);
\draw [-] (2) -- (3);
\draw [-] (3) -- (4);
\draw [-] (4) -- (5);
\draw [-] (5) -- (2);
\draw [-] (1) -- (13);
\draw [-] (1) -- (12);
\draw [-] (5) -- (6);
\draw [-] (1) -- (6);
\draw [-] (8) -- (6);
\draw [-] (11) -- (12);
\draw [-] (7) -- (12);
\draw [-] (1) -- (7);
\draw [-] (6) -- (7);
\draw [-] (9) -- (7);
\draw [-] (7) -- (11);
\draw [-] (11) -- (10);
\draw [-] (7) -- (10);
\draw [-] (9) -- (10);
\draw [-] (9) -- (6);
\draw [-] (1) to [out=100,in=190] (5);
\draw [-] (1) to [out=10,in=270] (5);
\draw [-] (13) -- (15);
\draw [dashed] (15) -- (15b);
\draw [-] (4) -- (16);
\draw [dashed] (16) -- (16b);
\draw [-] (14) -- (18);
\draw [dashed] (18) -- (18b);
\draw [-] (11) -- (17);
\draw [dashed] (17) -- (17b);
\end{tikzpicture}
\caption{a general noncompact metric graph.}
\label{fig-gen}
\end{figure}

\medskip
A modification of this model, proposed e.g. by \cite{GSD,N}, arises when one assumes that the nonlinearity affects only the \emph{compact core} $\K$ of the graph, namely the subgraph consisting of all its bounded edges (e.g., the compact core of Figure \ref{fig-nstar} is empty, while the one of Figure \ref{fig-gen} is given by Figure \ref{fig-gencomp}). In this case, the stationary equation of interest reads as
\begin{equation}
\label{eq-statNLSconcentrated}
	 u''+\chi_{\K}|u|^{p-2}\,u=\lambda u\qquad\text{(+ Kirch. cond.)}\,,
\end{equation}

\noindent with $\chi_{\K}$ denoting the characteristic function of $\K$.

The existence of solutions to this problem has been widely investigated in the $L^2$-subcritical case in \cite{ST1,ST2,T}. In particular, \cite{T} discusses the existence of the ground states of the modified energy functional
\begin{equation}
 \label{eq-en}
 E(u,\K):=\frac12\int_\G|u'|^2\dx-\frac1p\int_\K|u|^p\dx,
\end{equation}
while \cite{ST1,ST2} manage more general stationary solutions.

In this paper we aim at giving a first insight on the existence/nonexistence of ground states of the problem with the \emph{localized nonlinearity} in the critical case $p=6$. In particular, as a preliminary study, we explore a specific graph, the \emph{tadpole} graph (see Figure \ref{fig-tadpole}), which allows to point out some peculiar features of the problem whose understanding will suggest interesting perspectives for future investigations. More precisely, in our main result (namely, Theorem \ref{thm-main}) we prove, first, that there exists a threshold mass $\mu_1$ under which ground states cannot exist even though the functional $E(\cdot,\K)$ is bounded from below. Therefore, we establish the existence of another threshold $\mu_2\geq\mu_1$ such that, if $\mu\in[\mu_2,\mu_\rr]$ (where $\mu_\rr$ is the \emph{critical mass} of the real line defined by \eqref{eq-critreal}), then a ground state does exist; and, finally, that, for all $\mu>\mu_\rr$, $E(\cdot,\K)$ is unbounded from below.

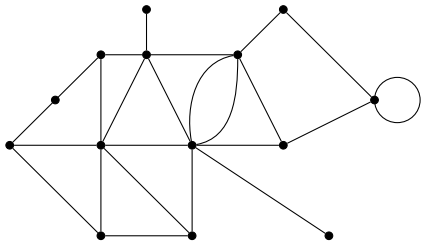
\begin{figure}
\centering
\begin{tikzpicture}[xscale= 0.6,yscale=0.6]
\node at (0,0) [nodo] (1) {};
\node at (2,0) [nodo] (2) {};
\node at (4,1) [nodo] (3) {};
\node at (2,3) [nodo] (4) {};
\node at (1,2) [nodo] (5) {};
\node at (-1,2) [nodo] (6) {};
\node at (-2,0) [nodo] (7) {};
\node at (-1,3) [nodo] (8) {};
\node at (-2,2) [nodo] (9) {};
\node at (-4,0) [nodo] (10) {};
\node at (-2,-2) [nodo] (11) {};
\node at (0,-2) [nodo] (12) {};
\node at (3,-2) [nodo] (13) {};
\node at (-3,1) [nodo] (14) {};
\draw (4.5,1) circle (0.5cm);
\draw [-] (1) -- (2);
\draw [-] (2) -- (3);
\draw [-] (3) -- (4);
\draw [-] (4) -- (5);
\draw [-] (5) -- (2);
\draw [-] (1) -- (13);
\draw [-] (1) -- (12);
\draw [-] (5) -- (6);
\draw [-] (1) -- (6);
\draw [-] (8) -- (6);
\draw [-] (11) -- (12);
\draw [-] (7) -- (12);
\draw [-] (1) -- (7);
\draw [-] (6) -- (7);
\draw [-] (9) -- (7);
\draw [-] (7) -- (11);
\draw [-] (11) -- (10);
\draw [-] (7) -- (10);
\draw [-] (9) -- (10);
\draw [-] (9) -- (6);
\draw [-] (1) to [out=100,in=190] (5);
\draw [-] (1) to [out=10,in=270] (5);
\end{tikzpicture}
\caption{the compact core of the graph in Figure \ref{fig-gen}.}
\label{fig-gencomp}
\end{figure}

\medskip
 For the sake of completeness we also mention some other recent works on the stationary solutions of the NLS equation on graphs. Problems with a wide class of $\delta$-type conditions and external potentials are managed in \cite{C,CFN2}. On the other hand, \cite{Do,MP} discuss compact graphs, while \cite{AD,ADST,GPS,PS} focus on periodic graphs (i.e., graphs whose noncompactness is not due to the presence of half-lines, but to the infinite number of edges). Finally, it is worth quoting three further works on evolution equations on graphs. The former is \cite{Du}, where is presented a preliminary result of Control Theory on graphs for the bi-linear Schr\"odinger equation; then \cite{MNS} introduces the study of the Airy equation (thus opening to the application of metric graphs in hydrodynamics); and, finally, \cite{BCT} discusses the bound states of another important dispersive equation on graph, the NonLinear Dirac (NLD) equation.

\medskip
The paper is organized as follows. In Section \ref{sec-res} we present a precise setting of the problem and we state our main result (Theorem \ref{thm-main}). In Section \ref{sec-comp} we show some preliminary results, mainly concerning compactness issues, while Section \ref{sec-proof} provides the proof of the main theorem of the paper.


\section{Setting and main results}
\label{sec-res}

We consider the \emph{tadpole} graph $\T$ (Figure \ref{fig-tadpole}), that is a connected noncompact metric graph consisting of a compact circle $\K$ and a half-line $\h$ (endowed with the usual intrinsic parametrization -- see \cite{AST2}) incident at the vertex $\mathrm{v}$.

\begin{figure}
 \centering
 \includegraphics[width=0.5\textwidth]{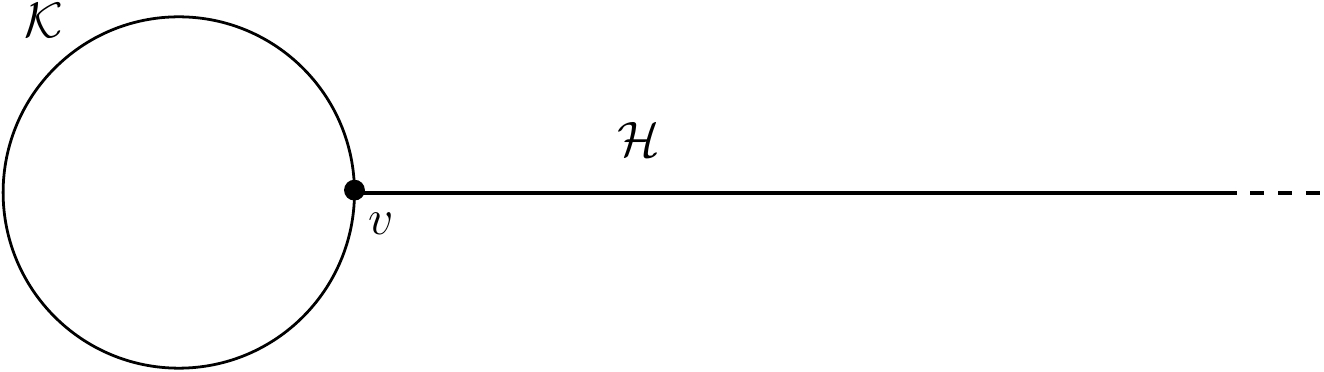}
 \caption{a tadpole graph.}
 \label{fig-tadpole}
\end{figure}

A function $u:\T\to\rr$ can be seen as a couple of functions $(v,w)$, with $v:\K\to\rr$ and $w:\h\to\rr$, and thus Lebesgue and Sobolev spaces can be defined as usual
\[
 L^p(\T):=\{u:\T\to\rr:v\in L^p(\K),\,w\in L^p(\h)\}
\]
and
\[
 H^1(\T):=\{u:\T\to\rr\, \text{ continuous}:v\in H^1(\K),\,w\in H^1(\h)\}.
\]
We also define, for $\mu>0$,
\[
 H_\mu^1(\T):=\left\{u\in H^1(\T):\int_\T|u|^2\dx=\mu\right\}\,.
\]

We address the problem of the existence of a function $u\in H_\mu^1(\T)$ such that $E(u,\K)=\mathcal{E}_\K(\mu)$, where
\begin{equation}
 \label{eq-energy_defn}
 E(u,\K):=\frac{1}{2}\int_{\T}|u'|^2\,dx-\frac{1}{6}\int_{\K}|u|^6\,dx
\end{equation}
and
\begin{equation}
 \label{eq-ground_lev}
 \mathcal{E}_\K(\mu):=\inf_{u\in\HmuT}E(u,\K).
\end{equation}
It is clear that such a minimizer $u$, usually called \emph{ground state}, satisfies
\[
 \left\{
 \begin{array}{l}
  \displaystyle v''+|v|^4\,v=\lambda v\\[.2cm]
  \displaystyle w''=\lambda w
 \end{array}
 \right.
\]
(for some $\lambda>0$) and
\begin{equation}
 \label{eq-kirch}
 v'(0)-v'(L)+w'(0)=0
\end{equation}
where $L:=|\K|$ and we have considered an anti-clockwise parametrization of $\K$, i.e. $u$ solves the stationary NLS equation \eqref{eq-statNLSconcentrated} on $\T$.

\begin{rem}
 We limit ourselves to consider real valued functions in the search of ground states since it can be shown that minimizers of the NLS energy are always real valued up to the multiplication times a constant phase (for more see \cite{AST2,T}). It is also possible to prove, by an easy regularity argument, that a ground state cannot be equal to zero at any point of the graph.
\end{rem}

\medskip
Before stating the main result of the paper, it is worth recalling some well-known facts on the ground states of the complete problem, i.e. with the nonlinearity extended on the whole graph. Precisely, we have to introduce the concept of \emph{critical mass}, as the existence of a minimizer in the critical case $p=6$ is strictly connected to the value of the mass. 

When $\G=\rr$ (see \cite{Ca}),
\begin{equation}
\label{eq-critreal}
 \inf_{u\in H_\mu^1(\rr)}E(u)=\left\{
 \begin{array}{ll}
  \displaystyle 0 & \text{if }\mu\leq\mu_\rr\\[.3cm]
  \displaystyle -\infty & \text{if }\mu>\mu_\rr
 \end{array}
 \right.
 \qquad(\mu_\rr=\frac{\sqrt{3}}{2}\pi)
\end{equation}
and the infimum is attained only at $\mu=\mu_\rr$; whereas, when $\G=\rr^+$,
\[
 \inf_{u\in H_\mu^1(\rr^+)}E(u)=\left\{
 \begin{array}{ll}
  \displaystyle 0 & \text{if }\mu\leq\mu_{\rr^+}\\[.3cm]
  \displaystyle -\infty & \text{if }\mu>\mu_{\rr^+}
 \end{array}
 \right.
 \qquad(\mu_{\rr^+}=\frac{\sqrt{3}}{4}\pi)
\]
and the infimum is attained only at $\mu=\mu_{\rr^+}$. The values $\mu_\rr$ and $\mu_{\rr^+}$ are said to be the critical masses of the line and of the half-line (respectively).

Concerning the tadpole $\G=\T$, as a consequence of Theorem 3.3 in \cite{AST4}, it has been proved that

\begin{equation*}
	\inf_{u\in\HmuT}E(u)\begin{cases}
	\geq0 & \text{if }\mu\leq\mu_{\rr^+}\\
	<0 & \text{if }\mu\in(\mu_{\rr^+},\mu_\rr]\\
	=-\infty & \text{if }\mu>\mu_\rr
	\end{cases}
\end{equation*}

\noindent and global minimizers of the energy exist if and only if $\mu\in(\mu_{\rr^+},\mu_\rr]$.

\medskip
We can now present the main result of this paper.

\begin{thm}
 \label{thm-main}
 There exist two values $\mu_1,\mu_2\in(\mu_{\rr^+},\mu_{\rr})$, with $\mu_1<\mu_2$, such that
 \begin{itemize}
  \item[(i)] if $\mu\leq\mu_1$, then $\mathcal{E}_\K(\mu)=0$ and it is not attained;
  \item[(ii)] if $\mu_2\leq\mu\leq\mu_\rr$, then there exists a ground state of mass $\mu$.
  \item[(iii)] if $\mu>\mu_\rr$, then $\mathcal{E}_\K(\mu)=-\infty$.
 \end{itemize}
 Furthermore, ground states always realize strictly negative energy levels.
\end{thm}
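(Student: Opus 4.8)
The plan is to treat the three mass regimes by separate arguments, deducing the last sentence of the statement as an immediate corollary of point~(i). For \emph{point~(iii)} I would exhibit an explicit minimizing sequence. Let $\phi$ be the soliton on $\rr$ realizing the infimum in \eqref{eq-critreal}, so that $\|\phi\|_{L^2(\rr)}^2=\mu_\rr$ and $\tfrac12\|\phi'\|_{L^2(\rr)}^2=\tfrac16\|\phi\|_{L^6(\rr)}^6=:A>0$. For $\sigma>0$ the function $\psi_\sigma(x):=\sqrt{\sigma}\,\sqrt{\mu/\mu_\rr}\,\phi(\sigma x)$ has mass $\mu$ and $\rr$-energy $E_\rr(\psi_\sigma)=\sigma^2 A\,\tfrac{\mu}{\mu_\rr}\bigl(1-(\mu/\mu_\rr)^2\bigr)$, which tends to $-\infty$ as $\sigma\to+\infty$ since $\mu>\mu_\rr$. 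Fixing a sub-arc of $\K$ not containing $\mathrm{v}$, for $\sigma$ large a translate of $\psi_\sigma$ is, up to an exponentially small remainder, supported in that arc; truncating it at a radius growing slowly to infinity, extending by zero on the rest of $\T$, and rescaling the amplitude to restore the exact mass $\mu$, one obtains $u_\sigma\in\HmuT$ with $E(u_\sigma,\K)=E_\rr(\psi_\sigma)+o(1)\to-\infty$, whence $\mathcal{E}_\K(\mu)=-\infty$. The only point to check is that the truncation and mass corrections cost $o(1)$, which follows from the exponential decay of $\phi$.

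For \emph{point~(i)} and the final assertion: $\mathcal{E}_\K(\mu)\le0$ for every $\mu>0$ by spreading the mass over a long sub-interval of $\h$, keeping $u$ equal to a small constant on $\K$, so that $E(u,\K)\to0^+$. For the reverse inequality the main tool is a Gagliardo--Nirenberg-type inequality on $\T$ with the sixth power localized on the compact core, $\int_\K|u|^6\dx\le C_\T\,\|u'\|_{L^2(\T)}^2\,\|u\|_{L^2(\T)}^4$ for $u\in H^1(\T)$, whose sharp constant $C_\T$ is finite (a consequence of $\|u\|_{L^\infty(\T)}^2\le 2\|u\|_{L^2(\T)}\|u'\|_{L^2(\T)}$) and, crucially, is \emph{not attained}. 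Setting $\mu_1:=\sqrt{3/C_\T}$, for $\mu\le\mu_1$ any $u\in\HmuT$ is nonconstant, hence $\|u'\|_{L^2(\T)}>0$, and the (strict) inequality yields $E(u,\K)=\tfrac12\|u'\|_{L^2(\T)}^2-\tfrac16\int_\K|u|^6\dx>\tfrac12\|u'\|_{L^2(\T)}^2\bigl(1-C_\T\mu^2/3\bigr)\ge0$; thus $E(u,\K)>0$ for every $u\in\HmuT$, so $\mathcal{E}_\K(\mu)=0$ and is not attained. Since a ground state is a minimizer and, by the next paragraph, minimizers can exist only for $\mu>\mu_1$ where $\mathcal{E}_\K(\mu)<0$, every ground state has strictly negative energy. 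Finally, $\mu_1\ge\mu_{\rr^+}$ is free, from $E(u,\K)\ge E(u)$ together with the recalled fact that $\inf_{\HmuT}E\ge0$ for $\mu\le\mu_{\rr^+}$; $\mu_1<\mu_\rr$ follows from any competitor whose ratio $\int_\K|u|^6/(\|u'\|_{L^2(\T)}^2\|u\|_{L^2(\T)}^4)$ exceeds $3/\mu_\rr^2$ (e.g.\ a constant on $\K$ with a suitably tuned exponential tail on $\h$, which gives $C_\T\ge1$); while the strict inequality $\mu_1>\mu_{\rr^+}$ is the delicate point discussed below.

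For \emph{point~(ii)}: for every $\mu>\mu_1$ one has $\mathcal{E}_\K(\mu)<0$, by testing on a rescaling of a near-extremizer of the above inequality (then $C_\T\mu^2>3$ forces $E(\cdot,\K)<0$); the value $\mu_2$ is a mass above which this, together with the compactness argument, can be certified. Moreover $\mathcal{E}_\K(\mu)\ge\inf_{\HmuT}E>-\infty$ for $\mu\le\mu_\rr$. One then runs a concentration--compactness argument on a minimizing sequence $(u_n)\subset\HmuT$, which is bounded in $H^1(\T)$ since $E(u_n)\le E(u_n,\K)$ and $\mathcal{E}_\K(\mu)$ is finite; up to subsequences $u_n\rightharpoonup u$ in $H^1(\T)$. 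Vanishing is excluded because $\mathcal{E}_\K(\mu)<0$ whereas spreading gives level $0$; a loss of mass $m\in(0,\mu)$ at infinity along $\h$ is excluded because the escaping bump lies away from $\K$ and hence carries nonnegative energy, forcing $\mathcal{E}_\K(\mu)\ge\mathcal{E}_\K(\mu-m)$, against the strict inequality $\mathcal{E}_\K(\mu)<\mathcal{E}_\K(\mu-m)$ for $m\in(0,\mu)$ (immediate when $\mathcal{E}_\K(\mu-m)=0$, and, when $\mu-m>\mu_1$, a consequence of the strict monotonicity $\mathcal{E}_\K(\mu)\le\tfrac{\mu}{\mu'}\mathcal{E}_\K(\mu')<\mathcal{E}_\K(\mu')$ for $\mu_1<\mu'<\mu$, obtained by the amplitude scaling $u\mapsto tu$ and $\mathcal{E}_\K(\mu')<0$). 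Hence $u_n\to u$ strongly in $H^1(\T)$, so $u$ is a ground state; taking $\mu_2<\mu_\rr$ gives a nondegenerate interval.

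The hard part is the strict inequality $\mu_1>\mu_{\rr^+}$, equivalently the sharp upper bound $C_\T<3/\mu_{\rr^+}^2$: this is the quantitative form of the fact that confining the nonlinearity to the compact core genuinely enlarges the range of masses with zero ground-state level, and it appears to require a fine analysis -- either a sharp estimate of the optimal constant, or a study of low-energy functions of mass close to $\mu_{\rr^+}$ relying on the known structure of minimizing sequences for the complete energy (which run off to infinity along $\h$ as half-solitons and thereby lose their $L^6(\K)$-mass). The gap $\mu_1<\mu_2$ reflects that these two thresholds are pinned only by one-sided estimates not shown to coincide; the non-attainment of the level $0$ and the strict subadditivity used in the compactness argument are the remaining sensitive points, but they follow fairly directly once the inequality and the sign of $\mathcal{E}_\K$ are under control.
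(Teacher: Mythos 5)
Your reductions for items (iii) and (ii)--compactness are sound and close in spirit to the paper: (iii) is exactly the concentration of a rescaled critical soliton inside $\K$ (the paper simply invokes the argument of \cite{AST4}), and your concentration--compactness step reproduces what the paper packages as Lemma \ref{lem-comp} (boundedness via \eqref{eq-modified_GN}, exclusion of vanishing by $\mathcal{E}_\K(\mu)<0$, exclusion of mass loss by the amplitude rescaling $u\mapsto\sigma u$). For (ii) the paper is more concrete than your ``near-extremizer'' argument: it tests with the explicit function \eqref{eq-const_exp} (constant on $\K$, exponential on $\h$), reduces $E(u,\K)<0$ to the quadratic inequality $\Lambda^2-\mu\Lambda+\tfrac34<0$ in $\Lambda=c^2L$, and obtains $\mu_2=\sqrt3\in(\mu_{\rr^+},\mu_\rr)$ outright; this is essentially the same computation that underlies your remark that the constant-plus-tail competitor gives ratio $\geq1$.

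The genuine gap is in item (i). The theorem asserts $\mu_1\in(\mu_{\rr^+},\mu_\rr)$, so the whole content of (i) beyond the trivial range $\mu\leq\mu_{\rr^+}$ (where \eqref{eq-enpos} already gives $E(u,\K)>0$) is that nonexistence persists for masses \emph{strictly above} $\mu_{\rr^+}$. In your scheme this is exactly the inequality $C_\T<3/\mu_{\rr^+}^2$ for the sharp constant of your localized Gagliardo--Nirenberg inequality, and you explicitly leave it unproved (``appears to require a fine analysis''); you also assert without proof that this sharp constant is not attained, which your argument needs at $\mu=\mu_1$. So as written the proposal only yields (i) for $\mu\leq\mu_{\rr^+}$, i.e.\ it does not prove the statement. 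The paper closes precisely this gap by a compactness/contradiction argument rather than by estimating a sharp constant: assuming ground states $u_\varepsilon$ exist for masses $\mu_\varepsilon\downarrow\mu_{\rr^+}$, one has $E(u_\varepsilon,\K)\leq0$, hence boundedness in $H^1(\T)$ via \eqref{eq-modified_GN}, and -- the key point -- a uniform lower bound $\|u_\varepsilon'\|_{L^2(\T)}\geq 3/(C_\infty^6L\mu_\rr^{3/2})$ obtained from $E(u_\varepsilon,\K)\le 0$ together with the $L^\infty$ inequality \eqref{eq-GN_infty}, see \eqref{eq-inf_kine}. This forbids the weak limit from vanishing (via $L^\infty(\K)$ convergence on the compact core), and weak lower semicontinuity then produces a nonzero $u$ with mass in $(0,\mu_{\rr^+}]$ and $E(u,\K)\leq0$, contradicting \eqref{eq-enpos}. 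Hence ground states cannot exist for masses in some interval $(\mu_{\rr^+},\mu_1]$, and Lemma \ref{lem-comp} upgrades this to $\mathcal{E}_\K(\mu)=0$ not attained there, which also gives the final assertion that ground states always have strictly negative energy. If you want to keep your GN-constant formulation, you would have to supply an argument of comparable strength (e.g.\ showing that maximizing sequences for your localized ratio with the ratio close to $3/\mu_{\rr^+}^2$ cannot exist, by an analogous limiting analysis), which is not sketched in your proposal.
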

    
We point out that the previous result displays a different phenomenology with respect to the analogous in the everywhere nonlinear problem. Indeed, even though ground states are proved to exist only for some intervals of masses in both cases, Theorem \ref{thm-main} suggests that these intervals are actually different. Specifically, it appears that concentrating the nonlinearity on the compact core does not allow the presence of global minimizers if the mass is too close to $\mu_{\rr^+}$, and a new lower threshold must arise. However, we are not able to detect the sharp values of $\mu_1$ and $\mu_2$ at the moment, so that it is  still an open problem to determine $\mu^*$ such that ground states with concentrated nonlinearity exist if and only if $[\mu^*,\mu_\rr]$. We will address this issue in a forthcoming paper.   


\section{Preliminaries and compactness}
\label{sec-comp}

First, let us recall some previous results on noncompact metric graphs, highlighting the consequences they have on the problem we discuss in the paper.

It is well-known (see for instance \cite{AST2,T}) that the following Gagliardo-Nirenberg inequalities
\begin{equation}
 \label{eq-GN}
 \uLsixT^6\leq C_\T\uLtwoT^4\udotT^2,
\end{equation}
\begin{equation}
 \label{eq-GN_infty}
 \|u\|_{L^\infty(\T)}\leq C_\infty\uLtwoT^{1/2}\udotT^{1/2}
\end{equation}
hold for every $u\in H^1(\T)$ (here $C_\T, C_\infty$ denote the optimal constants). Furthermore, a modified version of \eqref{eq-GN} has been established in \cite[Lemma 4.4]{AST4}. Precisely, for every $u\in\HmuT$, there exists $\theta_u:=\theta(u)\in[0,\mu]$, such that
\begin{equation}
 \label{eq-modified_GN}
 \uLsixT\leq 3\Big(\frac{\mu-\theta_u}{\mu_\rr}\Big)^2\udotT^2+C\sqrt{\theta_u}
\end{equation}
with $C>0$ independent of $u$.

In addition, recalling the definition of the complete NLS energy given by \eqref{eq-encomp} with $\G=\T$, we know (again from \cite{AST4}) that 
\[
\begin{array}{llll}
 \displaystyle (i)   & \displaystyle \mu\leq\mu_{\rr^+}         & \displaystyle \quad\Longrightarrow\quad & \displaystyle E(u)>0,\qquad\forall u\in\HmuT;\\[.7cm]
 \displaystyle (ii)  & \displaystyle \mu_{\rr^+}<\mu\leq\mu_\rr & \displaystyle \quad\Longrightarrow\quad & \displaystyle -\infty<\inf_{u\in\HmuT}E(u)<0\\[.7cm]
 \displaystyle (iii) & \displaystyle \mu<\mu_\rr                & \displaystyle \quad\Longrightarrow\quad & \displaystyle \inf_{u\in\HmuT}E(u)=-\infty. 
\end{array}
\]
Moreover, global minimizers exist only in case $(ii)$.

Since it is straightforward that, for every $u\in H^1(\T)$,
\[
 E(u,\K)\geq E(u),
\]
the previous observations have some relevant consequences on the problem with localized nonlinearity too. In fact, we have that
\begin{equation}
 \label{eq-enpos}
 E(u,\K)>0,\qquad\forall u\in u\in\HmuT,
\end{equation}
for every $\mu\leq\mu_\rr^+$, and that
\begin{equation}
 \label{eq-lowbound}
 \mathcal{E}_\K(\mu)>-\infty,\qquad\forall\mu\in(\mu_{\rr^+},\mu_\rr].
\end{equation}
On the other hand, arguing exactly as in  \cite{AST4}, one can show that,
\[
 \mathcal{E}_\K(\mu)=-\infty,\qquad\forall\mu>\mu_\rr,
\]
which immediately proves item \emph{(iii)} of Theorem \ref{thm-main}.


\medskip
We conclude this section establishing a compactness result, valid only for localized nonlinearities, which ensures that ground states exist if and only if the infimum of the energy is strictly negative and finite.

\begin{lem}
\label{lem-comp}
 Let $\mu\in(0,\mu_{\rr}]$. If
 \begin{equation}
  \label{eq-inf_neg}
  -\infty<\mathcal{E}_\K(\mu)<0
 \end{equation}
 then, there exists a ground state of $E(\,\cdot\,,\K)$ of mass $\mu$. If, on the contrary, $E(u,\K)>0$ for every $u\in\HmuT$, then 
 \begin{equation}
  \label{eq-infzero}
  \mathcal{E}_\K(\mu)=0
 \end{equation}
 and it is not attained.
\end{lem}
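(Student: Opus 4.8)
The plan is to prove the two halves separately, using the standard concentration-compactness strategy adapted to the fact that the nonlinearity lives only on the compact core $\K$.

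\medskip
\textbf{First part: $-\infty<\mathcal{E}_\K(\mu)<0$ implies existence.} I would take a minimizing sequence $(u_n)\subset\HmuT$ for $E(\cdot,\K)$. Since $\mathcal{E}_\K(\mu)<0$ and $\mathcal{E}_\K(\mu)>-\infty$, from $E(u_n,\K)=\frac12\udotT^2-\frac16\|u_n\|_{L^6(\K)}^6$ together with the Gagliardo--Nirenberg inequality \eqref{eq-GN} (which gives $\|u_n\|_{L^6(\K)}^6\le\|u_n\|_{L^6(\T)}^6\le C_\T\mu^2\udotT^2$) one gets that $(\udotT)$ is bounded: indeed $E(u_n,\K)\ge\frac12\udotT^2-\frac{C_\T\mu^2}{6}\udotT^2$, and since the left side stays bounded while... more carefully, boundedness of the derivative follows because $E(u_n,\K)$ is bounded above (it converges to $\mathcal{E}_\K(\mu)$) and bounded below, so $\udotT^2\le 2E(u_n,\K)+\frac13\|u_n\|_{L^6(\K)}^6$ must be controlled — here I would instead argue that if $\udotT\to\infty$ along a subsequence, then since $\mathcal{E}_\K(\mu)>-\infty$ is assumed, $\frac16\|u_n\|_{L^6(\K)}^6\ge\frac12\udotT^2-\mathcal{E}_\K(\mu)-o(1)$ blows up, but on the compact core $\K$ the embedding $H^1(\K)\hookrightarrow L^6(\K)$ together with $\|u_n\|_{L^\infty}\le C_\infty\mu^{1/4}\udotT^{1/2}$ and $\|u_n\|_{L^6(\K)}^6\le\|u_n\|_{L^\infty(\K)}^4\|u_n\|_{L^2(\K)}^2\le C_\infty^4\mu^2\udotT^2\mu$, so $\|u_n\|_{L^6(\K)}^6$ grows at most like $\udotT^2$, which is incompatible with $E(u_n,\K)$ bounded below unless $\udotT$ is bounded. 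Hence $(u_n)$ is bounded in $H^1(\T)$, and up to subsequences $u_n\rightharpoonup u$ weakly in $H^1(\T)$, $u_n\to u$ strongly in $L^\infty_{loc}$ and in $L^p(\K)$ for all $p$ (since $\K$ is compact), and $u_n\to u$ a.e. The crucial gain from localization is that $\int_\K|u_n|^6\dx\to\int_\K|u|^6\dx$ by compactness of $\K$. By weak lower semicontinuity of the $L^2$-norm of the derivative, $E(u,\K)\le\liminf E(u_n,\K)=\mathcal{E}_\K(\mu)$. It remains to rule out mass loss: a priori only $m:=\uLtwoT^2\le\mu$. If $m<\mu$ then $u$ has mass $m$, and I would rescale: consider $\widetilde u$ obtained from $u$ by keeping the profile on $\K$ and adjusting the half-line tail (or simply use a dilation $u_\sigma(x)=u(x/\sigma)$ suitably) to obtain a competitor of mass $\mu$ with energy $\le E(u,\K)<0$; alternatively, since $E(u,\K)\le\mathcal{E}_\K(\mu)<0$ and $u\neq0$, rescaling mass up from $m$ to $\mu$ along the half-line (which does not touch $\int_\K|u|^6$ and can only decrease $\udotT^2$ if done by translating tail mass to a flatter configuration, or one argues via the strict subadditivity/monotonicity $\mathcal{E}_\K(\mu)<\mathcal{E}_\K(m)$ for $m<\mu$) gives a contradiction. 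So $m=\mu$, convergence in $L^2$ is strong, hence in $H^1(\T)$, and $u$ is a ground state.

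\medskip
\textbf{Second part: if $E(u,\K)>0$ for all $u$, then $\mathcal{E}_\K(\mu)=0$, not attained.} Clearly $\mathcal{E}_\K(\mu)\ge0$. For the reverse, I would exhibit a sequence $(u_n)\subset\HmuT$ with $E(u_n,\K)\to0$: take functions that spread their mass further and further out along the half-line $\h$, e.g. $u_n$ equal to a small bump times an $H^1$ cutoff supported at distance $n$ on $\h$, zero on $\K$; then $\int_\K|u_n|^6=0$ and $\udotT^2\to0$ as the bump is stretched (mass $\mu$ spread over length $\sim n$ has kinetic energy $\sim\mu/n\to0$). This forces $\mathcal{E}_\K(\mu)=0$. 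It is not attained because any minimizer would have $E(u,\K)=0$, contradicting the hypothesis $E(u,\K)>0$ for all $u\in\HmuT$.

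\medskip
\textbf{Main obstacle.} The delicate point is ruling out the loss of mass to infinity along the half-line in the first part: the weak limit $u$ only inherits the mass that does not escape, and the energy contribution of the escaping part is governed by the (free) half-line problem, whose infimum is $0$ for $\mu\le\mu_\rr$. One must show that splitting mass is never favorable when $\mathcal{E}_\K(\mu)<0$ — essentially a strict-subadditivity argument: $\mathcal{E}_\K(\mu)<\mathcal{E}_\K(m)+ (\text{half-line energy of }\mu-m)=\mathcal{E}_\K(m)$ for any $0<m<\mu$. I would establish this by taking a near-optimal $u$ of mass $m$ on $\T$ with $E(u,\K)$ close to $\mathcal{E}_\K(m)$, then adding mass $\mu-m$ either by a rescaling that strictly lowers the energy (exploiting that $\mathcal{E}_\K<0$ scales favorably under mass increase in the critical regime) or by concatenating a flat low-energy tail; the strictness then contradicts $u$ being only a partial limit, forcing $m=\mu$. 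The subcritical-type compactness on $\K$ is free because $\K$ is compact; all the work is in this dichotomy exclusion on $\h$.
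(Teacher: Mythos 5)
There is a genuine gap at the key step of your first part: the boundedness of the minimizing sequence. Your argument is that $\|u_n\|_{L^6(\K)}^6\lesssim C_\infty^4\mu^2\|u_n'\|_{L^2(\T)}^2$, while boundedness of the energy forces $\frac16\|u_n\|_{L^6(\K)}^6\ge\frac12\|u_n'\|_{L^2(\T)}^2-O(1)$, and you claim these are ``incompatible'' unless $\|u_n'\|_{L^2(\T)}$ stays bounded. But a one-sided upper bound of the form $\|u_n\|_{L^6(\K)}^6\le A\,\|u_n'\|_{L^2(\T)}^2$ is only incompatible with the lower bound $\|u_n\|_{L^6(\K)}^6\ge 3\|u_n'\|_{L^2(\T)}^2-O(1)$ when $A<3$; if $A\ge 3$ both can hold with $\|u_n'\|_{L^2(\T)}\to\infty$, and the energy $\frac12\|u_n'\|^2-\frac16\|u_n\|_{L^6(\K)}^6$ can stay bounded as a difference of two exploding terms. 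Whether $A=C_\infty^4\mu^2$ (or the analogous constant from \eqref{eq-GN}) is below $3$ is precisely the critical-mass issue: by \eqref{eq-enpos} the hypothesis \eqref{eq-inf_neg} can only hold for $\mu>\mu_{\rr^+}$, and in that whole range the naive Gagliardo--Nirenberg constants are too large for your smallness condition, so the argument fails exactly where the lemma is non-vacuous. The paper's proof avoids this by using the \emph{modified} inequality \eqref{eq-modified_GN}: from the strict negativity $E(u_n,\K)\le -c<0$ (note: your proposal only uses boundedness below at this point, but strict negativity is essential) one first deduces $\theta_{u_n}\ge\widetilde c>0$, which makes the coefficient $1-(\mu-\theta_{u_n})^2/\mu_\rr^2$ bounded away from zero and only then yields the $H^1$ bound. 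Without \eqref{eq-modified_GN} or an equivalent device, your compactness step does not go through for $\mu\le\mu_\rr$.

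A secondary, smaller issue is the mass-recovery step. You hedge between a dilation $u(x/\sigma)$ (which is not well defined on a graph with a compact core of fixed length) and an unproved strict subadditivity $\mathcal{E}_\K(\mu)<\mathcal{E}_\K(m)$. The paper's argument is much simpler and you already have all its ingredients: since $E(u,\K)\le\mathcal{E}_\K(\mu)<0$ forces $\int_\K|u|^6>0$, if $0<m<\mu$ take the constant $\sigma>1$ with $\|\sigma u\|_{L^2(\T)}^2=\mu$ and observe $E(\sigma u,\K)<\sigma^2E(u,\K)<E(u,\K)\le\mathcal{E}_\K(\mu)$, a contradiction; the case $m=0$ (which your scaling cannot handle) is excluded separately because then $E(u,\K)=0\le\liminf_nE(u_n,\K)=\mathcal{E}_\K(\mu)<0$. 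Your second part (the spreading sequence on the half-line giving $\mathcal{E}_\K(\mu)\le 0$, hence $=0$ and not attained under positivity) matches the paper's construction and is fine.
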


\begin{proof}
 We start by showing that, for every $\mu\in(0,\mu_\rr]$,
 \begin{equation}
  \label{eq-infnp}
  \mathcal{E}_\K(\mu)\leq0\,.
 \end{equation}
 For every $n\in\mathbb{N}$, define 
 \[
  u_n(x):=\begin{cases}
  \alpha_n,         & \text{if }x\in (1,n)\cap\h,\\[.2cm]
  \alpha_n x,       & \text{if }x\in[0,1]\cap\h,\\[.2cm]
  \alpha_n (n+1-x), & \text{if }x\in[n,n+1]\cap\h,\\[.2cm]
  0,                & \text{elsewhere on }\T,
  \end{cases}
 \] 
 where $\{\alpha_n\}_{n\in\mathbb{N}}$ is chosen so that $\|u_n\|_{L^2(\T)}^2=\mu$, for every $n\in\mathbb{N}$ (note that this entails $\alpha_n\to0$, as $n\to+\infty$). It is, then, easy to check that $u_n\to0$ strongly in $H^1(\T)$, thus implying that $E(u_n,\K)\to0$, as $n\to+\infty$, and hence that \eqref{eq-infnp} is satisfied.
 
 On the other hand, if $E(u,\K)>0$ for every $u\in\HmuT$, then \eqref{eq-infnp} yields \eqref{eq-infzero} and, consequently, the infimum cannot be attained.
 
 Finally, suppose that, on the contrary, \eqref{eq-inf_neg} holds and let $\{u_n\}_{n\in\mathbb{N}}\subset\HmuT$ be a minimizing sequence for $E(\,\cdot\,,\K)$. Then, for large $n$, $E(u_n,\K)\leq-c$, with $c>0$, and, combining with \eqref{eq-modified_GN}, this entails
 \[
  \frac{1}{2}\|u_n'\|_{L^2(\T)}^2\Big[1-\frac{(\mu-\theta_{u_n})^2}{\mu_{\rr}^2}\Big]-C\sqrt{\theta_{u_n}}\leq E(u_n,\G)\leq-c<0
 \]
 with $\theta_{u_n}\in[0,\mu]$. Thus, one finds that $\theta_{u_n}\geq\widetilde{c}>0$, so that $\left(\frac{\mu-\theta_{u_n}}{\mu_\rr}\right)^2<1$ and hence $\{u_n\}_{n\in\mathbb{N}}$ is bounded in $\HmuT$. As a consequence, $u_n\rightharpoonup u$ in $H^1(\T) $ and $u_n\to u$ in $L_{loc}^6(\T)$ (up to subsequences), and thus
 \[
  E(u,\K)\leq\liminf_n E(u_n,\K)=\mathcal{E}_\K(\mu).
 \]
 It is, then, left to prove that $\|u\|_{L^2(\T)}^2=:m=\mu$.
 
 First we see that, if $m=0$, then $u\equiv 0$, and hence 
 \[
  \mathcal{E}_K(\mu)=\liminf_{n}E(u_n,\K)\geq E(u,\K)=0,
 \]
 which contradicts \eqref{eq-inf_neg}. On the other hand, if $m<\mu$, then there exists $\sigma>1$ satisfying $\|\sigma u\|_{L^2(\T)}^2=\mu$. However, this implies that
 \[
  E(\sigma u,\K)=\frac{\sigma^2}{2}\int_{\T}|u'|^2\,dx-\frac{\sigma^6}{6}\int_{\K}|u|^6\,dx<\sigma^2E(u,\K)<E(u,\K),
 \]
 which is again a contradiction. Hence, $m=\mu$, which concludes the proof.
\end{proof}

    
\section{Proof of Theorem \ref{thm-main}}
\label{sec-proof}

This section is devoted to the proof of items \emph{(i)} and \emph{(ii)} of Theorem \ref{thm-main} (item \emph{(iii)} has been already discussed in the previous section).
    
\begin{proof}[Proof of Theorem \ref{thm-main}: item (i)]
 First, note that, whenever $\mu\leq\mu_{\rr^+}$, combining \eqref{eq-enpos} and Lemma \ref{lem-comp}, one easily sees that no ground state may exist. 
 
 On the other hand, assume (by contradiction) that there exists a ground state of $E(\,\cdot\,,\K)$ of mass $\mu$, for every $\mu\in(\mu_{\rr^+},\mu_\rr]$. Then, let $\{\mu_\varepsilon\}_{\varepsilon>0}$ be a sequence such that $\mu_\varepsilon\to\mu_{\rr^+}$, as $\varepsilon\to0$, and let $u_\varepsilon$ be (one of) the associated ground state(s).
    
 Now, it is immediate that $E(u_\varepsilon,\K)\leq0$. As a consequence, exploiting the modified Gagliardo-Nirenberg inequality \eqref{eq-modified_GN} as in the proof of Lemma \ref{lem-comp}, there results 
 \[
   \|u_\varepsilon'\|_{L^2(\T)}^2\leq\frac{\mu_\varepsilon^2}{\mu_\rr^2}\|u_\varepsilon'\|_{L^2(\T)}^2+C\sqrt{\mu_\rr}.
 \]
 Therefore $\{u_\varepsilon\}_{\varepsilon>0}$ is bounded in $H^1(\T)$ and there exists $u\in H^1(\T)$ such that $u_\varepsilon\rightharpoonup u$ in $H^1(\T)$ and $u_\varepsilon\to u$ in $L_{loc}^6(\T)$ (up to subsequences), as $\varepsilon\to0$. 
    
 Furthermore, using \eqref{eq-GN_infty} and (again) the negativity of the energy, one finds
 \begin{align*}
  \|u_\varepsilon'\|_{L^2(\T)}^2 < & \, \frac{1}{3}\|u_\varepsilon\|_{L^6(\K)}^6\leq\frac{L}{3}\|u_\varepsilon\|_{L^\infty(\K)}^6\leq\frac{L}{3}\|u_\varepsilon\|_{L^\infty(\T)}^6\\[.3cm]
                              \leq & \, \frac{C_\infty^6 L}{3}\|u_\varepsilon\|_{L^2(\T)}^3\|u_\varepsilon'\|_{L^2(\T)}^3=\frac{C_\infty^6 L}{3}\mu_\varepsilon^{3/2}\|u_\varepsilon'\|_{L^2(\T)}^3,
 \end{align*}
 which yields (as $\|u_\varepsilon'\|_{L^2(\T)}^2\neq0$)
 \begin{equation}
  \label{eq-inf_kine}
  \|u_\varepsilon'\|_{L^2(\T)}\geq\frac{3}{C_\infty^6 L\mu_\rr^{3/2}},\qquad\forall\varepsilon>0,
 \end{equation}
 thus preventing $u\equiv 0$. Indeed, if $u\equiv 0$, then $u_\varepsilon\to0$ in $L^\infty(\K)$ (from compact embeddings) and, as $E(u_\varepsilon,\K)\leq0$, 
 \[
  \|u_\varepsilon'\|_{L^2(\T)}^2<\frac{1}{3}\|u_\varepsilon\|_{L^6(\K)}^6\leq\frac{1}{3}L\|u_\varepsilon\|_{L^\infty(\K)}^6\to0,\qquad\text{as }\varepsilon\to0,
 \] 
 but this contradicts \eqref{eq-inf_kine}.
 
 Finally, by the weak lower semicontinuity, we have
 \[
  \uLtwoT^2\leq\liminf_{\varepsilon\to0}\mu_\varepsilon=\mu_{\rr^+}
 \]
 and
 \[
  E(u,\K)\leq\liminf_{\varepsilon\to0}E(u_\varepsilon,\K)\leq0.
 \]
 Hence, $u$ is a function in $H_m^1(\T)$, for some $m\in(0,\mu_{\rr^+}]$, such that $E(u,\K)\leq0$. However, this is forbidden by \eqref{eq-enpos}, which (combining with Lemma \ref{lem-comp}) concludes the proof.
\end{proof}

\begin{proof}[Proof of Theorem \ref{thm-main}: item (ii)]
 Since by \eqref{eq-lowbound} the energy functional is lower bounded (whenever $\mu\leq\mu_\rr$), from Lemma \ref{lem-comp}, it is sufficient to exhibit a function with a strictly negative energy (as the mass exceeds a certain threshold).
 
 To this aim, fix $\mu\in(\mu_{\rr^+},\mu_\rr]$ and let    
 \begin{equation}
  \label{eq-const_exp}
  u(x):=\begin{cases}
  c & \text{if }x\in\K\\
  ce^{-\alpha x}  & \text{if }x\in\h,
  \end{cases}
 \end{equation}
 with $c,\alpha>0$ satisfying the mass condition
 \begin{equation}
  \label{eq-mass_alpha}
  \mu=\uLtwoT^2=\int_{\K}c^2\,dx+\int_{\h}c^2e^{-2\alpha x}\,dx=c^2L+\frac{c^2}{2\alpha}\,
 \end{equation}
 (see also Figure \ref{fig-21}).
 
 \begin{figure}
  \centering
  \includegraphics[width=0.7\textwidth]{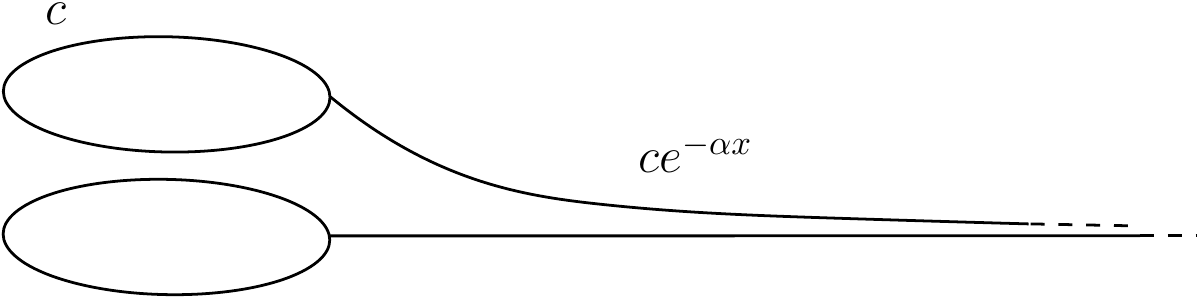}
  \caption{function introduced in the proof of Theorem \ref{thm-main}: item (ii).}
  \label{fig-21}
 \end{figure}
    
 Hence, $u\in\HmuT$ and its energy reads
 \begin{equation}
  \label{eq-test_energy}
  E(u,\K)=\frac{c^2\alpha}{4}-\frac{c^6L}{6}.
 \end{equation}
 Now, by \eqref{eq-mass_alpha}
 \[
  \alpha=\frac{c^2}{2(\mu-c^2L)},
 \]
 so that
 \[
  E(u,\K)=\frac{c^4}{8(\mu-c^2L)}-\frac{c^6L}{6}=\frac{c^4}{2}\Big(\frac{1}{4(\mu-c^2L)}-\frac{c^2L}{3}\Big).
 \]
 Therefore, imposing $E(u,\K)<0$ reduces to determine whether exists (or not) a value $c$ such that
 \[
  \frac{1}{4(\mu-c^2L)}-\frac{c^2L}{3}<0,
 \]
 namely, whether exists (or not) a value $\Lambda:=c^2L$ such that
 \[
  \Lambda^2-\mu\Lambda+\frac{3}{4}<0.
 \]
 However, the previous inequality is satisfied whenever
 \[
  \frac{\mu-\sqrt{\mu^2-3}}{2}<\Lambda<\frac{\mu+\sqrt{\mu^2-3}}{2},
 \]
 provided that
 \begin{equation}
  \label{eq-squareroot}
  \mu^2-3\geq0\qquad\Leftrightarrow\qquad\mu\geq\sqrt{3}\,.
 \end{equation}
 Henceforth, setting $\mu_2:=\sqrt{3}$, for every $\mu\in[\mu_2,\mu_\rr]$, there exists $u\in\HmuT$ such that $E(u,\K)<0$.
\end{proof}



\end{document}